\theoremstyle{plain}
\newtheorem{teo}{}[section]
\newtheorem{prop}[teo]{Proposition}
\newtheorem{cor}[teo]{Corollary}
\newtheorem{rem}[teo]{Remark}
\newtheorem{lem}[teo]{Lemma}
\newtheorem{thm}[teo]{Theorem}
\newtheorem{df}[teo]{Definition}
\theoremstyle{definition}
\newcommand\blfootnote[1]{%
  \begingroup
  \renewcommand\thefootnote{}\footnote{#1}%
  \addtocounter{footnote}{-1}%
  \endgroup
}
\title{On the triviality of flows in Alexandroff spaces}
\author{Pedro J. Chocano, Diego Mond\'{e}jar Ruiz, Manuel A. Mor\'on and Francisco R. Ruiz del Portal}
\date{}
\begin{document}
\maketitle

\large

{\centering\footnotesize Dedicated to José Manuel Rodríguez Sanjurjo on the occasion of his 70th birthday.\par}

\begin{abstract}
We prove that the unique possible flow in an Alexandroff $T_{0}$-space is the trivial one. To motivate this result, we relate Alexandroff spaces to topological hyperspaces.
\end{abstract}
\blfootnote{2020  Mathematics  Subject  Classification:06A06,  	54B20,  	37B02 .}
\blfootnote{Keywords: posets, Alexandroff spaces, hyperspaces, flows.}
\blfootnote{This research is partially supported by Grants PGC2018-098321-B-100 and BES-2016-076669 from Ministerio de Ciencia, Innovación y Universidades (Spain).}
\section{Introduction and motivation}
In \cite{alexandroff1937diskrete}, P.S. Alexandroff introduced what he called  ``Diskrete Räume'' (``discrete spaces''), which are nowadays known as Alexandroff spaces. Today, the term discrete space refers to a topological space $(X,T)$ where the topology $T$ is the power set of $X$. An Alexandroff space $(X,T)$ is a topological space, where the topology $T$ satisfies the following stronger axiom: the arbitrary intersection of open sets is again an open set. P.S. Alexandroff also related the spaces introduced in \cite{alexandroff1937diskrete} to preordered sets. Let $(X,T)$ be an Alexandroff space and $x\in X$. The intersection of every open set containing $x$, denoted by $U_x$, is an open set. The open set $U_x$ is minimal in the following sense: $U_x\subseteq V$ for every open set $V$ containing $x$. It can be related to $X$ the preorder given by $x\leq y$ if and only if $U_x\subseteq U_y$ ($U_y\subseteq U_x$). The first preorder is said to be the natural preorder, while the second one is said to be the opposite preorder. We have the following well-known results, that can be found for example in \cite[Lemma 1.6.2 and Lemma 1.2.7]{may1966finite}.
\begin{lem}\label{thm:teoremaAlexandroffA} An Alexandroff space satisfies the Kolgomorov separation axiom (also known as $T_0$ axiom) if and only if the related preorder is a partial order.
\end{lem}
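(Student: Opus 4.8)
The plan is to reduce the topological $T_0$ condition to the single order-theoretic axiom that separates a partial order from a general preorder, namely antisymmetry, and then to translate antisymmetry directly back into a separation statement about points. First I would recall the mechanism producing the related preorder: in an Alexandroff space every point $x$ has a minimal open neighborhood $U_x$, defined as the intersection of all open sets containing $x$, which is open precisely because of the defining Alexandroff axiom. The related preorder is then $x \le y \iff x \in U_y$, and I would note at once that this relation is automatically reflexive and transitive, so it is a partial order if and only if it is antisymmetric. Hence the whole lemma reduces to showing that the space is $T_0$ if and only if $x \le y$ and $y \le x$ together force $x = y$.

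The key translation step, which I would isolate as a small observation, is that $x \in U_y$ forces $U_x \subseteq U_y$, since $U_x$ is the smallest open set containing $x$ while $U_y$ is an open set containing $x$. Consequently $x \le y$ and $y \le x$ hold simultaneously exactly when $U_x = U_y$. With this in hand both implications are short. For $T_0 \Rightarrow$ antisymmetry I would argue by contradiction: if $x \le y$, $y \le x$ but $x \ne y$, then $T_0$ furnishes an open set $V$ containing exactly one of the two, say $x \in V$ and $y \notin V$; minimality gives $U_x \subseteq V$, yet $y \in U_x$ from $y \le x$, forcing $y \in V$, a contradiction.

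For the converse I would take distinct points $x \ne y$ and use the contrapositive of antisymmetry: $x \le y$ and $y \le x$ cannot both hold, so without loss of generality $x \not\le y$, that is $x \notin U_y$. Then $U_y$ is itself an open set containing $y$ but not $x$, witnessing $T_0$ for this pair. Combining the two directions, and recalling that the preorder is always reflexive and transitive, yields exactly the equivalence between $T_0$ and the related order being a partial order. I expect no genuine obstacle, as the argument is a direct unwinding of definitions once the minimal-neighborhood description is in place; the one point demanding care is fixing the convention for the related preorder and the direction of the inclusion $x \in U_y \Rightarrow U_x \subseteq U_y$, since a slip there would swap the roles of $x$ and $y$ throughout, and keeping that auxiliary inclusion explicit guards against it.
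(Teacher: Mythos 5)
Your proof is correct: the reduction to antisymmetry via the minimal open neighborhoods $U_x$, the key inclusion $x \in U_y \Rightarrow U_x \subseteq U_y$, and the two short implications are all sound and use the same order convention as the paper ($U_x = \{y : y \le x\}$). The paper itself states this lemma as a classical result of Alexandroff and gives no proof, so there is nothing to compare against; your argument is the standard one and fills that gap correctly.
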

\begin{lem}\label{thm:teoremaAlexandroffB} An Alexandroff space satisfies the separation axiom $T_1$ if and only if it is a discrete space.
\end{lem}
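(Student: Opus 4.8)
The plan is to prove both implications directly, after isolating the one structural feature that the Alexandroff hypothesis buys us. For each point $x \in X$, the arbitrary-intersection axiom guarantees that the set $U_x := \bigcap \{U \in T : x \in U\}$ is itself open; this is the \emph{minimal open neighbourhood} of $x$, and it is precisely the open set corresponding to the point $x$ under the preorder--topology dictionary referenced before Lemma \ref{thm:teoremaAlexandroffA}. I would state this observation first, since everything else hinges on the existence of $U_x$, which is exactly where the Alexandroff property (as opposed to mere topology) is used.

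The easy direction is $(\Leftarrow)$: if $(X,T)$ is discrete then every singleton is open, so for distinct $x,y$ the set $\{x\}$ separates $x$ from $y$ and the set $\{y\}$ separates $y$ from $x$; hence $X$ is $T_1$. (In fact discrete spaces satisfy every separation axiom, so this needs no real work.) For the substantive direction $(\Rightarrow)$, I would assume $(X,T)$ is Alexandroff and $T_1$ and aim to show that each singleton is open. Fix $x$ and take any $y \neq x$. By $T_1$ there is an open set $V$ with $x \in V$ and $y \notin V$; since $U_x \subseteq V$ by minimality, we get $y \notin U_x$. As $y$ was an arbitrary point other than $x$, this forces $U_x = \{x\}$, so $\{x\}$ is open. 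Because this holds for every $x$, every subset of $X$ is a union of singletons and is therefore open, i.e.\ $T$ is the power set of $X$ and the space is discrete.

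I do not expect a genuine obstacle here; the only point requiring care is the logical order, namely that the minimal neighbourhood $U_x$ must be produced \emph{before} invoking $T_1$, since it is the Alexandroff hypothesis alone that makes $U_x$ open and hence lets the $T_1$ condition collapse it to $\{x\}$. It is worth emphasising in the write-up that the Alexandroff assumption is indispensable: the cofinite topology on an infinite set is $T_1$ but not discrete, so $T_1$ by itself never yields discreteness. This sanity check confirms that the proof genuinely exploits the intersection axiom rather than merely the $T_1$ separation property, and it explains why the statement is specific to Alexandroff spaces.
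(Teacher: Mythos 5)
Your proof is correct. The paper states this lemma without proof, citing it as one of Alexandroff's original observations, and your argument---producing the minimal open neighbourhood $U_x$ from the arbitrary-intersection axiom and then using $T_1$ to shrink it to $\{x\}$---is exactly the standard way to establish it; the cofinite-topology sanity check is a nice touch confirming that the Alexandroff hypothesis is genuinely needed.
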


Given a partially ordered set $(X,\leq)$, a lower (upper) set $S\subseteq X$ is a set satisfying that for every $x\in S$ and $y<x$ in $X$  ($y>x$) then $y\in S$. The family of lower (upper) sets of a partially ordered set $(X,\leq)$ is a $T_0$ topology on $X$ that makes $X$ an Alexandroff space. It is not difficult to prove that for every set $X$ the partial orders defined on $X$ are in bijective correspondence with the $T_0$ topologies defined on $X$ making $X$ an Alexandroff space. Furthermore, let $f:X\rightarrow Y$ be a map between two Alexandroff $T_0$-spaces. Then $f$ is continuous if and only if $f$ is order-preserving, where we are considering the natural partial orders in $X$ and $Y$. From these facts, the category of Alexandroff spaces and the category of partially ordered sets are isomorphic. From now on, we treat Alexandroff $T_0$-spaces and partially ordered sets as the same object without explicit mention. Moreover, given an Alexandroff $T_0$-space, $U_x$ can be described as follows: $U_x=\{y\in X|y\leq x \}$

The class of Alexandroff spaces contains the class of all finite topological spaces, i.e., topological spaces having finite cardinal. Some important properties of Alexandroff spaces were established by M.C. McCord in \cite{mccord1966singular}.
\begin{thm}\label{thm:mcCord1} For any Alexandroff space $X$, there exist a $CW$-complex $K$ and a weak homotopy equivalence $f_K:|K|\rightarrow X$. Moreover, if $X$ is path-connected, $K$ can be chosen path-connected.
\end{thm}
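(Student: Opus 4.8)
The plan is to realize $X$, after reducing to the $T_0$ case, as the target of the \emph{McCord map} from the geometric realization of its order complex, and to verify the hypotheses of a gluing criterion that detects weak homotopy equivalences over a basis. Throughout, for a point $x$ I write $U_x$ for the minimal open set containing $x$ (the intersection of all its open neighbourhoods), and I use the specialization order $x\le y\iff U_x\subseteq U_y$, so that open sets are exactly the down-sets and $U_x=\{w:w\le x\}$ has maximum $x$. A preliminary fact I would record is that if $g,h\colon Y\to Z$ are continuous maps of Alexandroff spaces with $g(y)\le h(y)$ for all $y$, then $g\simeq h$: the map $H(y,t)=g(y)$ for $t<1$ and $H(y,1)=h(y)$ is continuous because $g^{-1}(W)\subseteq h^{-1}(W)$ for every open down-set $W$. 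Applying this to $\mathrm{id}_{U_x}\le\mathrm{const}_x$ shows each $U_x$ is contractible. To reduce to the $T_0$ case, note that for a general Alexandroff space the specialization order is only a preorder; quotienting by $x\sim y\iff U_x=U_y$ gives a $T_0$ Alexandroff space $X_0$, a poset by Lemma~\ref{thm:teoremaAlexandroffA}, with quotient map $q\colon X\to X_0$. Since $q^{-1}(U_{[x]})=U_x$ and both sets are contractible, the gluing criterion will show $q$ is a weak homotopy equivalence.

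Working now with the poset $X_0$ (which I rename $X$), let $K=\mathcal K(X)$ be the order complex, whose vertices are the points of $X$ and whose simplices are the nonempty finite chains $x_0<\dots<x_n$; its realization $|K|$ is a $CW$-complex. I would define the McCord map $\mu\colon|K|\to X$ by $\mu(\alpha)=\min(\mathrm{supp}\,\alpha)$, the least vertex of the unique chain carrying $\alpha$. To prove continuity it suffices to show $\mu^{-1}(U_x)$ is open for each $x$, since the $U_x$ form a basis. A direct computation gives $\mu^{-1}(U_x)=\{\alpha:\mathrm{supp}(\alpha)\cap U_x\neq\emptyset\}$, and this is open because moving from a simplex to one of its cofaces only enlarges the support, so the condition ``the support meets $U_x$'' persists on a neighbourhood of $\alpha$.

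The set $\mu^{-1}(U_x)$ is precisely the open star of the full subcomplex $\mathcal K(U_x)$ spanned by the vertices lying in $U_x$. Because $\mathcal K(U_x)$ is full, the standard linear homotopy that drives to zero the barycentric coordinates of vertices outside $U_x$ and renormalizes deformation retracts $\mu^{-1}(U_x)$ onto $|\mathcal K(U_x)|$; and $|\mathcal K(U_x)|$ is a simplicial cone with apex the maximum $x$ of $U_x$, hence contractible. Thus both $U_x$ and $\mu^{-1}(U_x)$ are weakly contractible, so the restriction $\mu\colon\mu^{-1}(U_x)\to U_x$ is a weak homotopy equivalence for every basic open set. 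McCord's gluing criterion \cite{mccord1966singular} --- a map that restricts to a weak homotopy equivalence over every member of a basis-like open cover of the target is itself a weak homotopy equivalence --- then yields that $\mu$ is a weak homotopy equivalence. I expect this gluing criterion, rather than the combinatorics of the order complex, to be the main obstacle: a fully self-contained treatment must prove it, typically by a compactness argument reducing any map of a sphere or disk into the target to finitely many basic opens and patching the resulting local nullhomotopies.

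Finally, for the general Alexandroff space I would lift $\mu\colon|K|\to X_0$ through the weak equivalence $q$: since $|K|$ is a $CW$-complex and $q$ is a weak homotopy equivalence, the induced map $[\,|K|,X\,]\to[\,|K|,X_0\,]$ is a bijection, so there is $f_K\colon|K|\to X$ with $q\circ f_K\simeq\mu$; as $q$ and $\mu$ are weak equivalences, two-out-of-three makes $f_K$ one as well. For the \emph{moreover} part, $f_K$ induces a bijection on path components, so $|K|$ is path-connected exactly when $X$ is, and $|K|$ is path-connected iff the comparability graph of the poset (the $1$-skeleton of $K$) is connected; hence when $X$ is path-connected we may take $K=\mathcal K(X_0)$, which is then path-connected.
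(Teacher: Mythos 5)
The paper does not prove this statement at all: it is quoted verbatim from McCord \cite{mccord1966singular}, so there is no in-paper argument to compare against. What you have written is, in all essentials, a faithful reconstruction of McCord's own proof --- the reduction to the $T_0$ quotient, the order complex, the map $\mu(\alpha)=\min(\mathrm{supp}\,\alpha)$, the contractibility of the minimal open sets $U_x$ and of the open stars $\mu^{-1}(U_x)$, and the basis-like gluing criterion --- and the outline is correct. Two remarks. First, in your order-homotopy lemma the inclusion is stated backwards: from $g(y)\le h(y)$ and $W$ an open down-set one gets $h^{-1}(W)\subseteq g^{-1}(W)$, and that is the inclusion actually needed to verify continuity of $H$ at $t=1$ (a basic neighbourhood $h^{-1}(W)\times(1-\epsilon,1]$ of $(y,1)$ must land in $W$ under both $g$ and $h$). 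Second, you are right that the gluing criterion is where all the real work lives: it is McCord's Theorem 6, proved via the Dold--Thom quasifibration machinery, and a self-contained treatment must supply that proof; everything else you wrote is elementary poset combinatorics. A small simplification for the non-$T_0$ case: the quotient $q\colon X\to X_0$ is in fact a genuine homotopy equivalence, since any set-theoretic section $s\colon X_0\to X$ is continuous (open sets are saturated) and $s\circ q$ is order-equivalent to the identity, so one may simply take $f_K=s\circ\mu$ instead of lifting $\mu$ through $q$ by CW-approximation and two-out-of-three.
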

\begin{thm}\label{thm:mcCord2} For any $CW$-complex $K$, there exist an Alexandroff space $X$ and a weak homotopy equivalence $f_K:|K|\rightarrow X$. Moreover, if $K$ is path-connected, $X$ can be chosen path-connected.
\end{thm}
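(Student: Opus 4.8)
The plan is to reduce the statement to the case of simplicial complexes and then to invoke the classical order-theoretic model of a simplicial complex by its face poset. First I would use the standard fact that every $CW$-complex is homotopy equivalent to the geometric realization of a simplicial complex $L$; this is guaranteed because a weak homotopy equivalence between $CW$-complexes is a homotopy equivalence by Whitehead's theorem, and every $CW$-complex can be replaced, up to homotopy, by a triangulated one. Fixing a homotopy equivalence $h\colon |K|\to |L|$, it then suffices to produce an Alexandroff $T_0$-space $X$ together with a weak homotopy equivalence $\mu\colon |L|\to X$: the composite $f_K=\mu\circ h$ is the desired map, since a homotopy equivalence is in particular a weak homotopy equivalence, and weak homotopy equivalences compose.

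For the model I would take $X=\mathcal{X}(L)$ to be the \emph{face poset} of $L$, whose points are the simplices of $L$ ordered by $\sigma\le\tau$ if and only if $\sigma$ is a face of $\tau$. By Lemma~\ref{thm:teoremaAlexandroffA} this partial order endows $\mathcal{X}(L)$ with the structure of an Alexandroff $T_0$-space; I take the open sets to be the up-sets, so that the minimal open neighbourhood of a simplex $\sigma$ is $U_\sigma=\{\tau:\sigma\le\tau\}$ and the family $\{U_\sigma\}$ forms a basis. The map $\mu$ sends a point $x\in|L|$ to its carrier, namely the unique simplex whose open cell contains $x$. Continuity is checked on the basis: $\mu^{-1}(U_\sigma)$ is exactly the open star of $\sigma$, which is open in $|L|$, and preimages of arbitrary opens are then unions of open stars. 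Note that $\mathcal{X}(L)$ is in general infinite, which is precisely why the target must be allowed to be an Alexandroff space rather than a finite one.

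The heart of the argument, and the step I expect to be the main obstacle, is proving that $\mu$ is a weak homotopy equivalence. For this I would invoke McCord's local-to-global criterion: if a continuous map restricts to a weak homotopy equivalence over every member of a basis of the target, then the map itself is a weak homotopy equivalence. Over a basic open set $U_\sigma$ the map $\mu$ restricts to $\mathrm{st}(\sigma)\to U_\sigma$; here $U_\sigma$ has $\sigma$ as a minimum element and is therefore contractible (the minimum provides a contracting homotopy), while the open star $\mathrm{st}(\sigma)$ is star-shaped about the barycentre of $\sigma$ and hence contractible as well, so the restriction is trivially a weak homotopy equivalence. Establishing the local-to-global criterion itself is the genuinely technical point, resting on a Mayer--Vietoris/nerve argument of Dold type for detecting weak equivalences through an open cover; by contrast the contractibility verifications above are routine. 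Finally, for the addendum, recall that a weak homotopy equivalence induces a bijection on path components: if $K$ is path-connected then $|L|\simeq|K|$ is path-connected, so $L$ is connected and $\pi_0(\mathcal{X}(L))\cong\pi_0(|L|)$ is a point, whence $\mathcal{X}(L)$ is path-connected.
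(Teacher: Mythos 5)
The paper states this result without proof, citing McCord's 1966 paper, and your argument is exactly McCord's original one: triangulate up to homotopy, pass to the face poset with the up-set (minimal-open-star) topology, and apply McCord's basis-like open cover criterion to the support map, with the local contractibility checks done as you describe. Your proposal is correct and takes essentially the same approach as the cited source.
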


Flows (or continuous dynamical systems) are another central concept of this note. Recall that a flow on a topological space $X$ is a continuous map $\varphi:\mathbb{R}\times X\rightarrow X$ satisfying that $\varphi(0,x)=x$ for every $x\in X$ and $\varphi(s+t,x)=\varphi(s,\varphi(t,x))$ for every $t,s\in \mathbb{R}$ and $x\in X$, where $\mathbb{R}$ represents the real numbers with its usual topology. Note that if $\varphi_t:X\rightarrow X$ is the map $\varphi_t(x)=\varphi(t,x)$, then $\varphi_0$ is the identity map and $\varphi_s\circ \varphi_t=\varphi_{s+t}$. Consequently, $\varphi_t$ is a homeomorphism for every $t\in \mathbb{R}$, where the inverse of $\varphi_t$ is given by $\varphi_{-t}$.

Notice that if a topological space $(X,T)$ is discrete  and $\varphi:\mathbb{R}\times X\rightarrow X$ is a flow, then $\varphi$ is trivial, that is, $\varphi(t,x)=x$ for all $x\in X$, $t\in \mathbb{R}$. This is obvious because the paths in $X$ are constant.

Recently, it has been proved that topological spaces can be approximated by Alexandroff spaces using the concepts of inverse systems (inverse sequences) and inverse limits, see \cite{mondejar2020reconstruction} for compact metric spaces and \cite{bilski2017inverse} for locally compact, paracompact and Hausdorff spaces. One could ask if flows on a general space $(X,T)$ can be in some sense approximated by flows in sufficiently close elements in the corresponding approach of inverse systems mentioned above. Or, in other words, can Alexandroff spaces have sufficiently enough topological richness on paths to admit different flows on them in order to approximate different flows in the general topological space $(X,T)$? We were led to this question motivated by some ideas developed in the Ph. D. thesis of the first author \cite{chocano2021computational}  co-advised by the third  and fourth  authors of this note.
We recommend J.P. May's notes \cite{may1966finite} for general knowledge on Alexandorff spaces.

\section{Some previous related results and the answer to the question}
In view of Theorem \ref{thm:mcCord1} and Theorem \ref{thm:mcCord2}, the above question has some interest because there is a lot of non-discrete and non-finite Alexandroff spaces with very rich structure of continuous paths into them. Particularly, the fundamental group of such spaces can be chosen to be isomorphic to any prefixed group. On the other hand, every group can be realized as the homeomorphism group of an Alexandroff space, see \cite{chocano2020topological}.

Sometime ago, part of the authors of this note studied the so-called upper semicontinuous or lower semicontinuous topologies in hyperspaces of closed sets in a topological space $X$, see for instance \cite{cuchillo1992lower,cuchillo1999note,moron2008homotopical}. We recall briefly some of these definitions to make this note self-contained. Let $(X,T)$ be a topological space. Then $2^{X}=\{ E\subset X| E$ is closed and not empty$\}$. The upper (lower) semi-finite topology on $2^X$ is generated by taking as a basis (resp. sub-basis) for the open collections in $2^X$ all collections of the form $\{E\in 2^X|E\subset U \}$ (resp. $\{E\in 2^X|E\cap U\neq \emptyset \}$) with $U$ an open subset of $X$. For more details we refer the reader to \cite[Definition 9.1]{michael1951topologies}. Specifically, they proved in \cite[Proposition 1]{cuchillo1999note} that the unique possible flows in the lower hyperspace of closed sets of a topological space $(X,T)$, denoted by $2_L^X$, are those constructed in a natural way by means of genuine flows on $(X,T)$. As an immediate consequence, we get the following result.
\begin{cor}\label{cor:corolarioflujoenespaciotrivial} Let $X_d$ be a topological space with the discrete topology. Then the unique possible flow on $2_L^{X_d}$ is the trivial one.
\end{cor}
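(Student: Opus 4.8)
The plan is to combine the two facts laid out in the preceding discussion: the structural description of flows on $2_L^X$ coming from \cite[Proposition 1]{cuchillo1999note}, and the rigidity of flows on discrete spaces.

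First I would invoke \cite[Proposition 1]{cuchillo1999note} with $X=X_d$. This tells us that any flow $\Phi\colon\mathbb{R}\times 2_L^{X_d}\rightarrow 2_L^{X_d}$ is of the natural form induced by a genuine flow $\varphi\colon\mathbb{R}\times X_d\rightarrow X_d$; that is, writing $\varphi_t(x)=\varphi(t,x)$ and $\Phi_t(A)=\Phi(t,A)$, one has $\Phi_t(A)=\varphi_t(A)=\{\varphi_t(x):x\in A\}$ for every closed set $A$ and every $t\in\mathbb{R}$. Recall that each $\varphi_t$ is a homeomorphism of $X_d$, so $\varphi_t(A)$ is again closed and $\Phi$ is well defined.

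Next I would apply the observation recalled in the introduction: since $X_d$ carries the discrete topology, its only continuous paths are the constant ones, and hence the orbit map $t\mapsto\varphi_t(x)$ is constant for every $x$. Using $\varphi_0=\mathrm{id}$ this forces $\varphi(t,x)=x$ for all $t\in\mathbb{R}$ and all $x\in X_d$, so $\varphi$ is the trivial flow and each $\varphi_t$ is the identity of $X_d$. Substituting back, $\Phi_t(A)=\varphi_t(A)=A$ for every $t$ and every closed $A$, which is exactly the assertion that $\Phi$ is the trivial flow on $2_L^{X_d}$.

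As for the main obstacle, essentially all of the work is hidden in the cited Proposition 1, which reduces the a priori large group of self-homeomorphisms of the hyperspace $2_L^{X_d}$ to the genuine flows on $X_d$; once that reduction is granted, the argument is immediate. The only point requiring a small check is that the natural induced flow really does carry a trivial base flow to the trivial hyperspace flow, i.e.\ that $A\mapsto\mathrm{id}(A)=A$, which is transparent. Thus I expect no genuine difficulty beyond correctly quoting the structural result and the discreteness argument.
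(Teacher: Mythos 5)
Your argument is correct and coincides with the paper's own justification: the authors likewise invoke \cite[Proposition 1]{cuchillo1999note} to reduce any flow on $2_L^{X_d}$ to a genuine flow on $X_d$, and then use the fact that the only continuous paths in a discrete space are constant to conclude that the base flow, and hence the induced hyperspace flow, is trivial. No discrepancies to report.
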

The reason why Corollary \ref{cor:corolarioflujoenespaciotrivial} holds is because any flow $\varphi:\mathbb{R}\times 2_L^{X_d}\rightarrow 2_L^{X_d}$ comes naturally from a flow $\pi:\mathbb{R}\times X_d\rightarrow X_d$. Therefore, for any $t\in \mathbb{R}$ and $x\in X$ we have a continuous path $\pi_t^x:[0,t]\rightarrow X$ given by $\pi_t^x(s)=\pi(s,x)$, $\pi_t^x(0)=x$ and $\pi_t^x(t)=\pi(t,x)$.  Since $X_d$ is a discrete topological space, it follows that the unique possible paths are just $\pi_t^x(s)=x$ for all $t\in \mathbb{R}$ and $s\in [0,t]$. Thus, the flow $\pi$ is trivial, and consequently $\varphi$ is trivial.

To relate the above result with our current framework, we have the following proposition.
\begin{prop}\label{prop:AlexandroffFinito} Let $X_d$ be a discrete topological space. Then $2_L^{X_d}$ is an Alexandroff space if and only if $X$ is a finite set.
\end{prop}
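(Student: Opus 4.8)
The plan is to work directly from the definition of the lower hyperspace topology together with the standard characterization of Alexandroff spaces as precisely those in which arbitrary intersections of open sets are open, equivalently those in which every point admits a minimal open neighborhood. The first step is to make the subbasis of $2_L^{X_d}$ explicit. Recall that the lower topology is generated by the sets $\{A : A\cap V\neq\emptyset\}$ with $V$ open in $X$. Since in a discrete space every singleton is open, one has $\{A : A\cap V\neq\emptyset\}=\bigcup_{x\in V}U_x$, where $U_x=\{A\in 2^{X_d} : x\in A\}$; hence the family $\{U_x\}_{x\in X}$ is itself a subbasis, and a basic open set of $2_L^{X_d}$ has the form $\{A : F\subseteq A\}$ with $F\subseteq X$ finite.

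For the implication from finiteness, if $X$ is finite then the underlying set $2^{X_d}$ of nonempty subsets is finite, so every intersection of open sets is a finite intersection and therefore open; thus $2_L^{X_d}$ is trivially Alexandroff. For the converse I would argue by contraposition, assuming $X$ infinite. The candidate obstruction is the single point $X\in 2_L^{X_d}$ (the whole space, which is closed and nonempty). I would first show that the arbitrary intersection $\bigcap_{x\in X}U_x$ of open sets equals the singleton $\{X\}$: a subset $A\subseteq X$ lies in $U_x$ for every $x$ exactly when it contains every point of $X$, i.e. when $A=X$. It then remains to check that $\{X\}$ is not open. Any open set containing $X$ must contain a basic neighborhood $\{A : F\subseteq A\}$ with $F$ finite; since $X$ is infinite there is some $y\in X\setminus F$, and then $X\setminus\{y\}$ belongs to this basic set yet differs from $X$. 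Hence no open set containing $X$ is contained in $\{X\}$, so $\{X\}$ is not open, and the intersection $\bigcap_{x\in X}U_x$ of open sets fails to be open; therefore $2_L^{X_d}$ is not Alexandroff.

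The main obstacle is locating the correct witness rather than carrying out any delicate computation. The whole space $X$, viewed as a point of the hyperspace, is exactly the place where the ``finite-witness'' nature of the lower topology collides with the Alexandroff condition: the basic neighborhoods $\{A : F\subseteq A\}$, indexed by the finite subsets $F\subseteq X$ directed under inclusion, form a strictly decreasing family with no minimal member precisely when $X$ is infinite. Recasting the argument this way shows that $X$ has no minimal open neighborhood, which is the cleanest formulation of the failure of the Alexandroff property; once the subbasis is written down, every remaining verification is routine.
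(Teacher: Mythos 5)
Your proof is correct and follows essentially the same route as the paper: both isolate the point $X\in 2_L^{X_d}$, observe that $\{X\}$ is an intersection of open sets (you intersect the subbasic sets $L_{\{x\}}$, the paper intersects all nonempty open sets), and then use the fact that any basic lower-topology neighborhood of $X$ is cut out by a finite witness set to conclude that $\{X\}$ can be open only when $X$ is finite. The only cosmetic differences are that you argue the converse by contraposition and dispatch the forward direction by noting the hyperspace itself is finite, whereas the paper exhibits the minimal open neighborhood of each point explicitly.
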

\begin{proof}
Suppose that $X$ is a finite set and $C\subset X$ is a non-empty subset. Then $C=\{c_1,...,c_k \}$, where $k\in \mathbb{N}$. Consequently, $U_C=L_{\{c_1 \}}\cap ... \cap L_{\{c_k\}}=\{ D\subset X|C\subset D\}$ is open in $2^{X_d}_L$ and obviously is the minimal open neighborhood of $C$ in $2^{X_d}_L$. In the above notation, $L_U$ for an open set $U$ in $X$ means $L_U=\{ F\subset X|F\neq \emptyset$ and $F\cap U\neq \emptyset \}$. Now, suppose $2^{X_d}_L$ is an Alexandroff space and consider $X\in 2^{X_d}_L$. Note firstly that for any non-empty open set $\mathcal{U}\subset 2^{X_d}_L$, we have $X\in \mathcal{U}$. This is because if $C\in \mathcal{U}$, then there are non-empty open subsets $U_1,...,U_m$ of $X$ such that $C\in L_{U_1}\cap... \cap L_{U_m}\subset \mathcal{U}$. Obviously, by definition, $X\in L_{U_1}\cap... \cap L_{U_m}$. Then $X$ is an element in the intersection of all non-empty open sets in $2^{X_d}_L$. In fact, we can prove that $\{X\}=\bigcap_{\mathcal{U}\neq \emptyset \text{ open in }2^{X_d}_L}\mathcal{U}$. This is because if $F\subset X$, where $F\neq X$, then there is an element $f\in X$ such that $f\notin F$, which means that $F\notin L_{\{f\}}$. Note that $L_{\{f\}}$ is open in $2^{X_d}_L$ because $\{ f\}$ is open in $X_d$. We have proved that $\{X\}=\bigcap_{\mathcal{U}\neq \emptyset \text{ open in }2^{X_d}_L}\mathcal{U}$. Hence, the unitary subset $\{ X\}$ is open in $2^{X_d}_L$ if  $2^{X_d}_L$ were an Alexandroff space. Moreover, in this case, $\{ X\}$ is the minimal open neighborhood of the point $X$ in $2^{X_d}_L$. Following \cite[Definition 9.1 ]{michael1951topologies}, we have that there exist non-empty subsets $U_1,...,U_l$ of $X$ with $\{ X\}=L_{U_1}\cap... \cap L_{U_l}$, where we can assume without loss of generality that $U_i\neq U_j$ if $j\neq i$. This condition implies that there is a finite subset $F\subset \bigcup_{i=1}^l U_i$ with $F\cap U_j\neq \emptyset$ for all $j=1,...,l$. Consequently, $F\in L_{U_1}\cap ... \cap L_{U_l}$ and then $F=X$. Thus, $X$ is finite.
\end{proof}
Proposition \ref{prop:AlexandroffFinito} motivates us to give the following definition.
\begin{df} Suppose $X_d$ is a discrete topological space, $U\subset X$ and $L_U=\{F\subset X|F\neq \emptyset$ and $F\cap U\neq \emptyset \}$. The family $SL=\{\bigcap_{i\in I} L_{U_i}|I$ is any set with $Card(I)\leq Card(X)\}$ is a base for a topology in $2^X$. We represent by $2^X_{SL}$ the corresponding topological space and we call it the hyperspace of $X$ with the strong lower semifinite topology.
\end{df}

\begin{prop}\label{prop:propiedades} Suppose $X$ is any set and consider it as a topological space with the discrete topology, denoted by $X_d$. Then the following properties are satisfied.

\begin{enumerate}
\item $2^{X_d}_{SL}$ is always an Alexandroff $T_{0}$-space.
\item For any $C\in 2^X$, the minimal neighborhood of $C$ in $2^{X_d}_{SL}$ is given by $\bigcap_{c\in C} L_{\{ c\}}=\{ D\in 2^X|C\subset D\}$.
\item The partial ordered set associated to the Alexandroff $T_{0}$-space $2^{X_d}_{SL}$ is given by $C\geq D$ if and only if $C\subseteq D$.
\item The identity map $I:2^{X_d}_{SL}\rightarrow 2^{X_d}_{L}$ is continuous and the inverse is continuous at $C\in 2^X$ if and only if $C$ is a finite set.
\item If we consider in $2^X$ the opposite order to that considered in 3., the corresponding Alexandroff $T_{0}$-space we get is $2^{X_d}_U$, that is,  the hyperspace of the discrete space $X_d$ with the upper semifinite topology.
\end{enumerate}

\end{prop}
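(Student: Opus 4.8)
The plan is to start with item~2, since the minimal neighborhoods computed there drive every other item. Given $C\in 2^X$, I would first note that $D\in L_{\{c\}}$ iff $c\in D$, so $\bigcap_{c\in C}L_{\{c\}}=\{D\in 2^X\mid C\subseteq D\}$; because $\mathrm{Card}(C)\le \mathrm{Card}(X)$ this set is a genuine member of the base $SL$, and it visibly contains $C$. To see it is the \emph{minimal} open neighborhood of $C$, I would take an arbitrary basic open set $\bigcap_{i\in I}L_{U_i}$ containing $C$, so that $C\cap U_i\neq\emptyset$ for each $i$; choosing $c_i\in C\cap U_i$ gives $L_{\{c_i\}}\subseteq L_{U_i}$ (monotonicity of $U\mapsto L_U$), whence $\bigcap_{c\in C}L_{\{c\}}\subseteq\bigcap_{i\in I}L_{\{c_i\}}\subseteq\bigcap_{i\in I}L_{U_i}$. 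This establishes item~2, and item~1 follows at once: every point having a minimal open neighborhood forces arbitrary intersections of open sets to be open, so $2^{X_d}_{SL}$ is Alexandroff, while for $T_0$ I observe that if $C\neq D$ then (after possibly swapping) $C\not\subseteq D$, so $U_C=\{E\mid C\subseteq E\}$ contains $C$ but not $D$.

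For item~3 I would recall the order $C\le D\iff U_C\subseteq U_D$ associated to an Alexandroff $T_0$-space and substitute item~2: $U_C\subseteq U_D$ means $C\subseteq E\Rightarrow D\subseteq E$ for all $E$, which (taking $E=C$, and conversely) is equivalent to $D\subseteq C$. Hence $C\ge D$ iff $C\subseteq D$. Item~5 is then a neighborhood-matching argument: passing to the opposite order replaces each minimal neighborhood by the down-set $\{D\mid D\subseteq C\}$, and I would check directly that for discrete $X$ this is exactly the minimal neighborhood of $C$ in the upper semifinite topology $2^{X_d}_U$, whose basic open sets are $\{F\mid F\subseteq V\}$ for $V$ open, with the choice $V=C$ (legitimate since $C$ is open in $X_d$) giving minimality. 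Since both Alexandroff spaces have identical minimal neighborhoods at every point, they coincide as topological spaces.

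Item~4 is the substantive point. Continuity of $I$ is formal: each basic $L$-open set $L_{U_1}\cap\dots\cap L_{U_n}$ is a \emph{finite} intersection and hence also lies in $SL$, so the $SL$-topology is finer and $I\colon 2^{X_d}_{SL}\rightarrow 2^{X_d}_L$ is continuous. For the inverse at a point $C$, the finite case is easy: if $C=\{c_1,\dots,c_k\}$ then by item~2 the minimal $SL$-neighborhood is $U_C=L_{\{c_1\}}\cap\dots\cap L_{\{c_k\}}$, already a finite intersection and therefore $L$-open, so $I^{-1}$ is continuous at $C$. The hard direction, which I expect to be the main obstacle, is the converse for infinite $C$: I must show that no finite-intersection $L$-neighborhood of $C$ can be contained in $U_C$. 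Given any $L_{U_1}\cap\dots\cap L_{U_n}\ni C$, I would pick $d_i\in C\cap U_i$ and set $D=\{d_1,\dots,d_n\}$; then $D$ meets every $U_i$, so $D\in L_{U_1}\cap\dots\cap L_{U_n}$, yet $D$ is finite while $C$ is infinite, so $C\not\subseteq D$ and thus $D\notin U_C$. Consequently every $L$-neighborhood of $C$ escapes $U_C$, and $I^{-1}$ fails to be continuous at $C$.

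The delicate feature throughout is the contrast between the two topologies: $2^{X_d}_{SL}$ permits intersections of up to $\mathrm{Card}(X)$ many sets $L_U$ (so that the infinite set $\bigcap_{c\in C}L_{\{c\}}$ is a basic neighborhood), whereas $2^{X_d}_L$ admits only finitely many. This restriction is precisely what produces the finite witness $D$ above and pins down ``$C$ finite'' as the exact criterion in item~4; verifying carefully that basic $L$-open sets are finite intersections is the one place I would be most cautious.
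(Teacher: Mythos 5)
Your proof is correct. Note that the paper states Proposition \ref{prop:propiedades} without any proof, so there is no argument of the authors to compare against; what you have written is the natural verification, and it is consistent with the techniques the paper does display elsewhere --- in particular your finite witness $D=\{d_1,\dots,d_n\}$ in item~4 is exactly the device used in the proof of Proposition \ref{prop:AlexandroffFinito} to force $X$ to be finite. The logical order you choose (item~2 first, everything else read off from the minimal neighborhoods $U_C=\{D\mid C\subseteq D\}$) is the right one. The only spot worth tightening is the first half of item~4: for a finite set $X$ a basic $L$-open set $L_{U_1}\cap\dots\cap L_{U_n}$ may involve more than $\mathrm{Card}(X)$ distinct indices and so need not literally belong to the base $SL$; but it is still $SL$-open, since by your own minimality argument it contains the minimal $SL$-neighborhood of each of its points, and that is all continuity of $I$ requires.
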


With all above, we get the following four families of Alexandroff $T_{0}$-spaces:
\begin{itemize}
\item $\mathcal{F}_1=\{ 2^{X_d}_{SL}$ for a discrete topological space $X_d\}$.
\item $\mathcal{F}_2=\{ 2^{X_d}_{U}$ for a discrete topological space $X_d\}$.
\item $\mathcal{F}_3=\{ 2^{X_d}_{F_L}$ for a discrete topological space $X_d\}$.
\item $\mathcal{F}_4=\{ 2^{X_d}_{F_U}$ for a discrete topological space $X_d\}$.
\end{itemize}
Where $2^X_F=\{ C\subset X|C\neq \emptyset$ and $Card(C)<\infty\}$ and $2^{X_d}_{F_L}$ is the topological subspace restricting the lower semifinite topology to $2^{X_{d}}_F$. Analogously, we have $2^{X_d}_{F_U}$ for the upper semifinite topology.

\begin{rem}
\begin{itemize}
\item The families  $\mathcal{F}_1$ and $\mathcal{F}_2$ can be used to embed, topologically, any Alexandroff $T_{0}$-space.
\item The families  $\mathcal{F}_3$ and  $\mathcal{F}_4$ can be used to embed, topologically, any locally finite Alexandroff $T_{0}$-space.
\end{itemize}
Results related to above statements can be found in \cite[Chapter 4]{mondejar2015hyperspaces}.
\end{rem}

One could follow proving \cite[Proposition 1]{cuchillo1999note} for spaces in any of the families $\mathcal{F}_1,\mathcal{F}_2,\mathcal{F}_3,\mathcal{F}_4$ obtaining that any space in $\mathcal{F}_1\cup \mathcal{F}_2\cup \mathcal{F}_3\cup\mathcal{F}_4$ is an example of an Alexandroff space admitting only trivial flows. Instead of this, we finish this note giving the following general result.
\begin{thm} Let $X$ be any Alexandroff $T_0$-space and  let $\varphi:\mathbb{R}\times X\rightarrow X$ be a flow. Then $\varphi$ is trivial, i.e., $\varphi(t,x)=x$ for any $t\in \mathbb{R}$ and $x\in X$.
\end{thm}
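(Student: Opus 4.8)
The plan is to exploit the order-theoretic description of Alexandroff $T_0$-spaces together with the group structure of $\mathbb{R}$. Recall that by Lemma \ref{thm:teoremaAlexandroffA} such a space $X$ carries an associated partial order, which I will encode through minimal open sets: for each $x\in X$ let $U_x$ be the intersection of all open sets containing $x$ (open, since $X$ is Alexandroff), and set $x\le y$ iff $x\in U_y$, equivalently $U_x\subseteq U_y$. Two standard facts I would first record are that a map between Alexandroff $T_0$-spaces is continuous if and only if it is order-preserving, and hence that each homeomorphism $\varphi_t$ is an order isomorphism of $(X,\le)$. The first is immediate from minimality of the $U_x$: if $f$ is continuous and $a\in U_b$, then $f^{-1}(U_{f(b)})$ is an open set containing $b$, hence contains $U_b$, so $f(a)\in U_{f(b)}$, i.e.\ $f(a)\le f(b)$.

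First I would fix $x\in X$ and consider the orbit path $\gamma_x\colon\mathbb{R}\to X$, $\gamma_x(t)=\varphi(t,x)$, which is continuous with $\gamma_x(0)=x$. Since $U_x$ is an open neighborhood of $x$, continuity at $0$ yields an $\epsilon>0$ such that $\varphi(t,x)\in U_x$, that is $\varphi(t,x)\le x$, for all $t\in(-\epsilon,\epsilon)$.

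The key step, and the one I expect to carry the whole argument, is to upgrade this one-sided bound to an equality by using reversibility of the flow. Fix $t\in(0,\epsilon)$. Applying the bound at $-t$ gives $\varphi(-t,x)\le x$; applying the order isomorphism $\varphi_t$ and using $\varphi_t\circ\varphi_{-t}=\varphi_0=\mathrm{id}$ gives $x=\varphi_t(\varphi(-t,x))\le\varphi_t(x)=\varphi(t,x)$. Combined with $\varphi(t,x)\le x$ and antisymmetry of the partial order (which holds precisely because $X$ is $T_0$, again by Lemma \ref{thm:teoremaAlexandroffA}), this forces $\varphi(t,x)=x$. Thus $\varphi(t,x)=x$ for all $t$ in a neighborhood of $0$.

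Finally I would globalize via the group law. The stabilizer $S=\{t\in\mathbb{R}:\varphi(t,x)=x\}$ is a subgroup of $(\mathbb{R},+)$: it is closed under addition because $\varphi(s+t,x)=\varphi(s,\varphi(t,x))$, and under negation because $\varphi(-t,x)=\varphi(-t,\varphi(t,x))=\varphi(0,x)=x$ whenever $t\in S$. Since $S$ contains the interval $(-\epsilon,\epsilon)$ and every real number is an integer multiple of an element of that interval, $S=\mathbb{R}$. As $x$ was arbitrary, $\varphi(t,x)=x$ for all $t$ and $x$, i.e.\ the flow is trivial. The only genuinely delicate point is the reverse-inequality step; everything else is bookkeeping with minimal open sets and the one-parameter group axioms.
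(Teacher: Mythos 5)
Your proof is correct and follows essentially the same route as the paper: continuity at $(0,x)$ places $\varphi(t,x)$ in the minimal neighborhood $U_x$ for small $t$, order-preservation of $\varphi_t$ together with $\varphi_t\circ\varphi_{-t}=\mathrm{id}$ yields the reverse inequality, $T_0$ (antisymmetry) forces equality, and the group law extends this to all of $\mathbb{R}$. Your version is marginally cleaner in two spots --- you invoke antisymmetry directly where the paper argues by contradiction via $U_{\varphi_t(x)}=U_x$, and you use the stabilizer subgroup where the paper telescopes $s=\sum_{k=1}^n \frac{s}{n}$ --- but these are cosmetic differences, not a different argument.
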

\begin{proof}
For any $x\in X$ consider $U_x$ as the minimal open neighborhood of $x$ in $X$. In terms of the corresponding partial order $\leq $ in $X$, we have $U_x=\{ y\in X|y\leq x\}$. Using the product topology in $\mathbb{R}\times X$, the continuity of $\varphi$ and the fact that $\varphi(0,x)=x$ for all $x\in X$, we deduce that for any $x\in X$ there is a positive value $t_x$ such that $\varphi((-t_x,t_x)\times U_x)\subset U_x$. Consider now $t$ such that $|t|<t_x$. Then the map $\varphi_{t_{|U_x}}$ given by $\varphi_{t_{|U_x}}(y)=\varphi(t,y)$ for $y \in U_x$ is surjective. In fact, it is a homeomorphism from $U_x$ onto itself. We prove the last assertion. Suppose $y\in U_x$. Then $y=\varphi_{0_{|U_x}}(y)=\varphi_{t_{|U_x}}\circ \varphi_{-t_{|U_x}} (y)$. Moreover, $\varphi_{-t_{|U_x}}(y)\in U_x$ because $|t|<t_x$, which implies that $\varphi_{t_{|U_x}}(\varphi_{-t_{|U_x}}(y))=y$.

Now, consider the point $(t,x)\in (-t_x,t_x)\times U_x$. We will prove that $\varphi(t,x)=x$. We have that $x=\varphi_t(\varphi_{-t}(x))$ and $\varphi_{-t}(x)\in U_x$. If $\varphi_{-t}(x)\neq x$, then $\varphi_{-t}(x)<x$ and $\varphi_{t}(x)\neq x$ due to the injectivity of $\varphi_{t}$. By the continuity of $\varphi_{t}$, $\varphi_{t}$ preserves the order, so $x=\varphi_{t}(\varphi_{-t}(x))<\varphi_t(x)$ and then $x\in U_{\varphi_{t}(x)}$. On the other hand, $\varphi_{t}(x)\in U_{x}$ because $|t|<t_x$. This implies that $U_{\varphi_{t}(x)}= U_{x}$ and it is a contradiction because $X$ is a $T_{0}$-space.
Hence, we have $\varphi_{-t}(x)=x$. Since $\varphi_t^{-1}=\varphi_{-t}$, it follows $\varphi_t(x)=\varphi(t,x)=x$ for any $t$ satisfying $|t|<t_x$. Finally, fix now $(s,x)\in \mathbb{R}\times X$. Then there is a natural number $n$ such that $|\frac{s}{n}|<t_x$. Therefore, $\varphi(\frac{s}{n},x)=x$, but $s=\sum_{k=1}^{n}\frac{s}{n}$. Since $\varphi$ is a flow, and using  the associativity of the sum of real numbers, we have $\varphi(s,x)=\varphi(\sum_{k=1}^{n}\frac{s}{n},x)=\varphi(\sum_{k=1}^{n-1}\frac{s}{n},\varphi(\frac{s}{n},x))=\cdots=\varphi(\frac{s}{n},x)=x$ and then $\varphi$ is trivial .
\end{proof}
\begin{rem} Notice that this result may be seen as a generalization of \cite[Lemma 8.1.1]{barmak2011algebraic}. This lemma states the following: if $X$ is a finite $T_0$-space, $x\in X$ and $f:X\rightarrow X$ is a homeomorphism satisfying that $x$ is comparable to $x$, then $f(x)=x$.  
\end{rem}

\textbf{Acknowledgments.} The authors wish to express their thanks to the reviewer and editor for their helpful comments.

\bibliography{bibliography}
\bibliographystyle{plain}

\newcommand{\Addresses}{{
  \bigskip
  \footnotesize
  
  \textsc{ P.J. Chocano, Departamento de Matemática Aplicada,
Ciencia e Ingeniería de los Materiales y
Tecnología Electrónica, ESCET
Universidad Rey Juan Carlos, 28933
Móstoles (Madrid), Spain}\par\nopagebreak
  \textit{E-mail address}:\texttt{pedro.chocano@urjc.es}

  \medskip

    \textsc{ D. Mond\'{e}jar Ruiz,  Departamento de Matemática Aplicada y Estadística, Facultad de Ciencias Económicas y
Empresariales, Universidad San Pablo-CEU, CEU Universities, Calle Julián Romea 23, 28003
Madrid, Spain}\par\nopagebreak
  \textit{E-mail address}: \texttt{diego.mondejarruiz@ceu.es}

  \medskip

\textsc{ M. A. Mor\'on,  Departamento de \'Algebra, Geometr\'ia y Topolog\'ia, Universidad Complutense de Madrid and Instituto de
Matematica Interdisciplinar, Plaza de Ciencias 3, 28040 Madrid, Spain}\par\nopagebreak
  \textit{E-mail address}: \texttt{ma\_moron@mat.ucm.es}

  \medskip

\textsc{ F. R. Ruiz del Portal,  Departamento de \'Algebra, Geometr\'ia y Topolog\'ia, Universidad Complutense de Madrid and Instituto de
Matematica Interdisciplinar
, Plaza de Ciencias 3, 28040 Madrid, Spain}\par\nopagebreak
  \textit{E-mail address}: \texttt{R\_Portal@mat.ucm.es}

}}

\Addresses

\end{document}